\newtheorem{prop}{Proposition}
\newtheorem{defn}{Definition}
\newcommand{\norm}[1]{\lVert #1 \rVert}
\newcommand{\abs}[1]{\left \lvert #1 \right \rvert}
\newcommand{\extr}{\mathrm{extr}}
\newcommand{\NEW}[1]{{\em #1}}
\author{Olivier Fercoq
\thanks{The doctoral work of the author is supported by Orange Labs through the research contract CRE~3795 with INRIA.}}
\title{PageRank optimization applied to spam detection}
\institute{INRIA Saclay and CMAP Ecole Polytechnique \\
\email{olivier.fercoq@inria.fr}}%
\begin{document}
\ifthenelse{\boolean{llncs}}{}
{
\author{Olivier Fercoq
\titlenote{The doctoral work of the author is supported by Orange Labs through the research contract CRE~3795 with INRIA.}
\affaddr{INRIA Saclay and CMAP Ecole Polytechnique} \\
\email{olivier.fercoq@inria.fr}}%
\title{PageRank optimization applied to spam detection}
}

\maketitle

\begin{abstract}
We give a new link spam detection and PageRank demotion algorithm called MaxRank. Like TrustRank and AntiTrustRank, it starts with a seed of hand-picked trusted and spam pages. We define the MaxRank of a page as the frequency of visit of this page by a random surfer minimizing an average cost per time unit. On a given page, the random surfer selects a set of hyperlinks and clicks with uniform probability on any of these hyperlinks. The cost function penalizes spam pages and hyperlink removals. The goal is to determine a hyperlink deletion policy that minimizes this score. The MaxRank is interpreted as a modified PageRank vector, used to sort web pages instead of the usual PageRank vector. The bias vector of this ergodic control problem, which is unique up to an additive constant, is a measure of the ``spamicity'' of each page, used to detect spam pages. We give a scalable algorithm for MaxRank computation that allowed us to perform experimental results on the WEBSPAM-UK2007 dataset. We show that our algorithm outperforms both TrustRank and AntiTrustRank for spam and nonspam page detection.
\end{abstract}


\section{Introduction}

Internet search engines use a variety of algorithms to sort web pages based on their text
content or on the hyperlink structure of the web.
In this paper, we focus on algorithms that use the latter hyperlink structure, 
called link-based algorithms. The basic notion for all these algorithms is the web graph,
which is a digraph with a node for each web page and an arc between pages~$i$ 
and~$j$ if there is a~hyperlink from page~$i$ to page~$j$. 

One of the main link-based ranking methods is the Page\-Rank introduced by Brin and Page~\cite{Brin-Anatomy}.
It is defined as the invariant measure of a~walk made by a~random surfer
on the web graph. When reading a~given page, the surfer either
selects a~link from the current page (with a~uniform probability),
and moves to the page pointed by that link, or interrupts his current search,
and then moves to an~arbitrary page, which is selected
according to given ``teleportation'' probabilities.
The rank of a~page is defined as its frequency of visit
by the random surfer. It is interpreted as the ``popularity'' of the page.

From the early days of search engines, some webmasters
have tried to get their web pages overranked thanks to malicious manipulations.
For instance, adding many keywords
on a page is a classical way to make search engines consider a page relevant
to many queries.
With the advent of link-based algorithms, spammers have developed new strategies, called link-spamming~\cite{Gyongyi-taxonomy},
that intend to give some target page a high score.
For instance, Gy\"{o}ngyi and Garcia-Molina~\cite{linkspamalliances}
showed various linking strategies that improve the PageRank score of a page. They justified the presence of link farms with patterns with every page linking to
one single page.
Baeza-Yates, Castillo and L\'opez~\cite{collusionTopologies} also
showed that making collusions is a~good way to improve PageRank.

In order to fight such malicious manipulations that
 deteriorate search engines' results and deceive web surfers,
various techniques have been developed.
We refer to~\cite{CastilloDavison} for a detailed survey of this subject.
Each spam detection algorithm is focused on a particular
aspect of spam pages. Content analysis (see~\cite{Ntoulas-contentanalysis} for instance) is the main tool to
detect deceiving keywords. Some simple heuristics~\cite{AbouAssaleh-heuristicsSpamdexing}
may be enough to detect the most coarse link-spam techniques,
but more evolved graph algorithms like clique detection~\cite{Saito-clique},
SpamRank~\cite{Benczur-spamrank} or Truncated PageRank~\cite{Becchetti-linkspam}
have also been developed to fight link-spamming.
As web spammers adapt themselves to detection algorithms,
machine learning techniques~\cite{GanSuel-machineLearning} try to discover actual
predominant spam strategies and to adapt to its evolutions.
Another direction of research concerns the propagation of trust through the web graph
with the TrustRank algorithm~\cite{Gyongyi-Trustrank} and its variants
or the propagation of distrust through a reversed web graph with the AntiTrustRank algorithm~\cite{Krishnan-Antitrustrank}.

In this paper, we develop a new link spam detection and PageRank demotion algorithm called
MaxRank. Like in~\cite{Gyongyi-Trustrank, Krishnan-Antitrustrank}, we start with a seed of hand-picked trusted
and spam pages. 
We define the MaxRank of a page as the frequency of visit of this page by a random surfer 
minimizing an average cost per time unit. On a given page, the random surfer selects a 
set of hyperlinks and clicks with uniform probability on any of these hyperlinks. 
The cost function penalizes spam pages and hyperlink removals. The goal 
is to determine an optimal hyperlink deletion policy.
The features of MaxRank are based on PageRank optimization 
\cite{AvrLit-OptStrat, Viennot-2006, NinKer-PRopt,IshiiTempo-FragileDataPRcomp,Blondel-PRopt} and more
particularly on the results of \cite{Fercoq-PRopt}. 
Those papers have shown that the problem of optimizing the PageRank of a set of pages
by controlling some hyperlinks can be solved by Markov Decision process algorithms.
In these works, the optimization of PageRank was thought from a webmaster's point of view
whereas here, we take the search engine's point of view.
We show that the Markov Decision Process defining MaxRank is solvable in polynomial time
 (Proposition~\ref{prop:welldesc}),
because the polytopes
of occupation measures admit efficient (polynomial time) separation
oracles.
The invariant measure of the Markov Decision Process, the MaxRank vector, is interpreted as a modified PageRank vector, used to sort web pages
instead of the usual PageRank vector. The solution of the ergodic dynamic programming equation, 
called the bias vector, is unique up to 
an additive constant and is interpreted as a measure of the ``spamicity'' of each page, used to detect spam pages.

We give a scalable algorithm for MaxRank computation that allowed us
to perform numerical experiments on the WEBSPAM-UK2007 dataset \cite{webspam2007}. We show that
our algorithm outperforms both TrustRank and AntiTrustRank for spam and nonspam page detection.
As an example, on the WEBSPAM-UK2007 dataset, for a recall of 0.8 in the spam detection problem, MaxRank has a precision of
0.87 while TrustRank has a precision of 0.30 and AntiTrustRank a precision of 0.13.

\section{The PageRank and (Anti)- \\ TrustRank algorithms}

We first recall the basic elements of the Google PageRank computation, 
see~\cite{Brin-Anatomy} and~\cite{LanMey-Beyond} for more information.
We call \NEW{web graph} the directed graph with a~node per web page
and
an arc from page~$i$ to page~$j$ if page~$i$ contains a~hyperlink to page~$j$.
We identify the set of pages to $[n]:=\{1,\ldots,n\}$. 

Let $D_i$ denote the number of hyperlinks contained in page $i$.
Assume first that $D_i\geq 1$ for all $i\in [n]$, meaning that every page
has at least one outlink. Then, we construct
the $n\times n$ stochastic matrix $S$, which is such that
\begin{align}
S_{i,j}=\begin{cases}D_i^{-1} & \text{if page }j \text{ is pointed to from page }i\\
0 & \text{otherwise}
\end{cases}\label{e-def-S}
\end{align}

We also fix a~row vector $z \in \mathbb{R}_+^n $, the \NEW{zapping}
or \NEW{teleportation} vector, which must be stochastic (so, $\sum_{j\in[n]} z_j =1$), 
together with a~\NEW{damping factor} $\alpha \in [0,1]$ (typically $\alpha=0.85$) and define the new stochastic matrix
\begin{equation*}
 P=\alpha S + (1-\alpha) e z
\end{equation*}
where $e$ is the (column) vector in $\mathbb{R}^n$ with all entries equal to~$1$. 

When some page $i$ has no outlink, $D_i=0$, and so the entries of the $i$th row of the matrix $S$ cannot be defined according to~\eqref{e-def-S}.
Then, we set $S_{i,j}:=z_j$.

Consider now a~Markov chain
$(X_t)_{t\geq0}$ with transition matrix $P$, so that for all $i,j\in [n]$, 
$\mathbb{P}(X_{t+1}=j | X_t=i) = P_{i,j}$.
Then,
$X_t$ represents the position of a~websurfer at time $t$:
when at page $i$, the websurfer continues his current
exploration of the web with probability $\alpha$ and moves to the next
page by following the links included in page $i$, as above, or with
probability $1-\alpha$, stops his current exploration and then teleports
to page $j$ with probability~$z_j$.


The \NEW{PageRank} $\pi$ is defined as the invariant measure of 
the Markov chain $(X_t)_{t\geq 0}$ representing the behavior of the websurfer.
This invariant measure is unique if $\alpha<1$ or if $P$ is irreducible.


The TrustRank algorithm~\cite{Gyongyi-Trustrank} is a semi-automatic algorithm designed
for spam detection and PageRank demotion. It starts with a seed of
hand-picked trusted pages and then launches the PageRank algorithm with
a teleportation vector with nonzero entries only on this seed of trusted pages.
Thus the initial trust score of trusted pages will propagate through hyperlinks.
The fundamental idea is that trusted pages link to trusted pages and spam pages
are linked to by spam pages~\cite{Castillo-neighbors}.
Each coordinate of the vector obtained this way gives the TrustRank score of the associated page.
Pages with a high TrustRank score are trusted and considered to be non spam, pages with a small
Trustrank score are untrusted and considered to be spam. The boundary between trusted and
untrusted pages is an arbitrary threshold in TrustRank.
As it is a variant of PageRank, the TrustRank score can also be used as such to rank pages instead of the usual PageRank.
This is called PageRank demotion, where untrusted pages get a negative promotion of their score.

The AntiTrustRank algorithm~\cite{Krishnan-Antitrustrank} follows the same idea as TrustRank
but it uses a seed of spam pages instead of a seed of trusted pages and
launches the PageRank algorithm using reversed arcs, i.e., it considers
a reversed web graph where there is an arc between nodes $i$ and $j$ if page $j$ points to page~$i$.
Pages with a high AntiTrustRank are then considered to be spam pages.

\section{Well-described Markov decision processes}  \label{sec:wellDescMDP}

%

The definition of the MaxRank relies on the notion of Markov Decision Processes that we recall now.

A finite {\em Markov decision process} is a $4$-uple $(I,(A_i)_{i\in I},p,c)$
where $I$ is a finite set called the {\em state space}; for all $i \in I$, $A_i$ is the finite set of {\em admissible actions} in state $i$;
$p: I\times \cup_{i\in I} (\{i\} \times A_i) \to \mathbb{R}_+$ is the {\em transition law}, so that $p(j|i,a)$ is the probability to go to state $j$ form state $i$ when
action $a \in A_i$ is selected; and $c: \cup_{i\in I}( \{i\} \times A_i) \to \mathbb{R}$ is the {\em cost function}, so that $c(i,a)$ is the instantaneous cost
when action $a$ is selected in state $i$.

Let $X_t\in I$ denote the state of the system at the discrete time $t\geq 0$.
A {\em deterministic control strategy} $\nu$ is a~sequence of actions $(\nu_t)_{t\geq 0}$
such that for all $t \geq 0$, $\nu_t$ is a~function of the history $h_t=(X_0, \nu_0, \ldots, X_{t-1}, \nu_{t-1}, X_t)$ and $\nu_t \in A_{X_t}$.
Of course, $\mathbb{P}( X_{t+1}=j | X_t , \nu_t ) = p(j|X_t, \nu_t) , \forall j \in [n], \forall t \geq 0$.
More generally, we may consider {\em randomized} strategies $\nu$ where $\nu_t$ is a probability measure on $A_{X_t}$.
A strategy $\nu$ is {\em stationary} (feedback) if there exists a function $\bar{\nu}$
such that for all $t\geq 0$, $\nu_t(h_t) = \bar{\nu}(X_t)$.

Given an initial distribution $\mu$ representing the law of $X_0$, the average cost infinite horizon Markov decision problem, also called {\em ergodic control problem},  consists in maximizing 
\begin{equation}\label{eqn:ergo}
\liminf_{T \rightarrow +\infty} \frac{1}{T}
 \mathbb{E}(\sum_{t=0}^{T-1} c(X_t,  \nu_t))
\end{equation}
where the maximum is taken 
over the set of randomized control strategies $\nu$.
Indeed, the supremum is the same if it is taken only over the set 
of randomized (or even deterministic) stationary feedback strategies (Theorem~9.1.8 in~\cite{puterman-mdp} for instance). 

A Markov decision process is {\em unichain} if the transition matrix corresponding to every
stationary policy has a single recurrent class. Otherwise it is {\em multichain}. 
When the problem is unichain, its value does not depend on the initial distribution whereas
when it is not, one may consider a vector $(g_i)_{i \in I}$ where $g_i$ represents the value of the problem~\eqref{eqn:ergo} when starting from state $i$, meaning that the law of $X_0$ is the Dirac measure at point $i$.

We shall need an extension of the formalism of Markov Decision processes
in which the actions are implicitly described. This extension, developped in~\cite{Fercoq-PRopt},
 relies on the theory of Groetschel, Lov\'asz and Schrijver~\cite{Lovasz-geomAlgo}
of linear programming over polyhedra given by separation oracles.

\begin{defn}[Def.\ 6.2.2 of \cite{Lovasz-geomAlgo}]  \label{defn:welldescribed}
We say that a po\-lyhedron $\mathcal{B}$ has {\em facet-complexity} at most~$\phi$ if there exists
a~system of inequalities with rational coefficients that has solution set~$\mathcal{B}$ and
such that the encoding length of each inequality of the system (the sum of the number of bits of the rational numbers appearing as coefficients
in this inequality) is at most~$\phi$. 

A {\em well-described polyhedron} is a~triple $(\mathcal{B};n,\phi)$ where $\mathcal{B} \subseteq \mathbb{R}^n$
is a~polyhedron with facet-complexity at most $\phi$. The {\em encoding length} of $\mathcal{B}$ is by definition $n+\phi$.
\end{defn}
\begin{defn}[Prob. (2.1.4) of \cite{Lovasz-geomAlgo}]
A~{\em strong separation oracle} for a set $K$ is an~algorithm that solves the
following problem: given a~vector~$y$, 
decide whether $y \in K$ or not and if not, find a~hyperplane
that separates $y$ from~$K$; 
i.e.,
find a~vector~$u$ such
that $u^T y > \max \{u^T x, x\in K\}$.
%
\end{defn}

Inspired by Definition~\ref{defn:welldescribed}, we introduced the following notion.
\begin{defn}[\cite{Fercoq-PRopt}] \label{defn:welldescribedMDP}
A finite Markov decision process $(I, (A_i)_{i\in I}, p, c)$ is {\em well-described} 
if for every state $i \in I$, we have $A_i \subset \mathbb{R}^{L_i}$
for some $L_i \in \mathbb{N}$, 
if there exists $\phi \in \mathbb{N}$
such that  the convex hull of every action set $A_i$ is a~well-described polyhedron $(\mathcal{B}_i; L_i, \phi)$
with a~polynomial time strong separation oracle, and if
the costs and transition probabilities satisfy
$c(i,a)=\sum_{l \in [L_i]} a_l C_{i}^l$
and
$p( j | i , a) = \sum_{l \in [L_i]} a_l Q_{i,j}^l$, $\forall i,j \in I$, 
$\forall a\in  A_i$,
where $C_{i}^l$ and $Q_{i,j}^l$
are given rational numbers,
for $i,j\in I$ and $l\in [L_i]$.

The {\em encoding length} of a well-described Markov decision process is by definition the sum of the encoding lengths of the rational numbers $Q_{i,j}^l$ and $C_{i}^l$ and of the well-described polyhedra~$\mathcal{B}_i$.
\end{defn}

The interest of well described Markov decision processes is that,
even if they have a number of actions exponential in the size of the problem,
Theorem~3 in~\cite{Fercoq-PRopt} shows that
the average cost infinite horizon problem for a well-described (multichain) Markov decision process can be solved in a time
polynomial in the input length.

%


\section{The MaxRank algorithm} 

In this section, we define the MaxRank algorithm.
It is based on our earlier works on PageRank optimization~\cite{Fercoq-PRopt}.
In~\cite{Fercoq-PRopt}, we considered the problem of optimizing the PageRank
of a given website from a webmaster's point of view, that is with
some controlled hyperlinks and design constraints.
Here, we take the search engine's point of view.
Hence, for every hyperlink of the web, we can choose to take it
into account or not: 
our goal is to forget spam links while letting trusted links active in the determination of the ranking.

As in TrustRank~\cite{Gyongyi-Trustrank} and AntiTrustRank~\cite{Krishnan-Antitrustrank}, we start with a seed of
trusted pages and known spam pages. The basic idea is to minimize the sum of PageRank scores
of spam pages and maximize the sum of PageRank scores of nonspam pages,
by allowing us to remove some hyperlinks when computing the PageRank.
However, if we do so, the optimal strategy simply consists in isolating
spam pages from trusted pages: there is then no hope to detect other
spam and nonspam pages. Thus, we add a penalty when a hyperlink is
removed, so that ``spamicity'' can still propagate through (removed or not removed) hyperlinks.
Finally, we also control the teleportation vector in order to
penalize further pages that we suspect to be spam pages.

We model this by a controlled random walk on the web graph, in which
the hyperlinks can be removed. Each time the random surfer goes to a spam page,
he gets a positive cost, each time he goes to a trusted page,
he gets a negative cost. When the status of the page is unknown, no cost incurs.
In addition to this a priori cost, he gets a penalty for each hyperlink removed.
Like for PageRank, the random surfer teleports with probability $\alpha$
at every time step; however, in this framework, he chooses the set of pages
to which he wants to teleport.

Let $\mathcal{F}_x$ be
the set of pages pointed by $x$ in the original graph and $D_x$ be
the degree of $x$. An action consists in determining $J \subseteq \mathcal{F}_x$,
the set of hyperlinks that remain,
and $I \subseteq [n]$, the set of pages to which the surfer may teleport.
We shall restrict $I$ to have a cardinality equal to $N \leq n$.
Then, the probability of transition from page $x$ to page $y$
is
\[
 p(y|x, I, J) = \alpha \nu_y(I,J) +(1-\alpha) z_y(I)
\]
where the teleportation vector and the hyperlink click probability distribution are given by
\begin{align*}
z_y(I)=\begin{cases}
\abs{I}^{-1} & \text{if }y \in I \\
0 & \text{otherwise}
\end{cases} 
\end{align*}
\begin{align*}
\nu_{y}(I,J)=\begin{cases}
z_y(I) & \text{if } \abs{J}=\emptyset \\
\abs{J}^{-1} & \text{if }y \in J \\
0 & \text{otherwise}
\end{cases}
\end{align*}
The cost at page $x$ is given by
\[
c(x,I,J)=c'_x + \gamma \frac{D_x-\abs{J}}{D_x} \enspace. 
\]
$c'_x$ is the a~priori cost of page $x$.
This a~priori cost should be positive for known spam pages and negative for trusted pages. 
$D_x$ is the degree of $x$ in the web graph and $\gamma>0$ is a penalty factor.
The penalty $\gamma \frac{D_x-\abs{J}}{D_x}$ is proportional to the number of pages removed.

We study the following ergodic control problem:
\begin{align} \label{eqn:ergodicPbMaxrank}
\inf_{(I_t)_{t\geq 0}, (J_t)_{t\geq 0}} \limsup_{T \rightarrow +\infty} \frac{1}{T}
 \mathbb{E}(\sum_{t=0}^{T-1} c(X_t,I_t,J_t)) \enspace ,
\end{align}
where an admissible control consists in selecting,
at each time step $t$, a subset of pages $I_t \subseteq [n]$ with $\abs{I_t}=N$
to which teleportation
is permitted, and a subset $J_t \subseteq \mathcal{F}_{X_t}$ of the set of hyperlinks
in the currently visited page $X_t$.

The following proposition gives an alternative formulation
of Problem~\eqref{eqn:ergodicPbMaxrank} that we will then show to be well-described.

\begin{prop} \label{prop:rewriting}
Fix $N \in \mathbb{N}$ and let
\[
 Z=\{ z \in \mathbb{R}^n | \sum_{i \in [n]} z_i =1 , 0 \leq z_i \leq \frac{1}{N} \} \enspace .
\] 
Let $\mathcal{F}_x$ be
the set of pages pointed by $x$ in the original graph and $D_x$ be
the degree of $x$.
Let $\mathcal{P}_x$ be the polyhedron defined as the set of vectors
 $(\sigma, \nu) \in \mathbb{R}^{D_x+1} \times \mathbb{R}^{n}$
such that there exists $w \in \mathbb{R}^{(D_x+1) \times n}$ verifying
\begin{subequations}
\begin{align}
& \sum_{d=0}^{D_x} \sigma^d =1 & &  \label{eqa} \\ 
&\sigma^d\geq 0 \; , & &
\forall d \in \{ 0, \ldots, D_x \} \label{eqb} \\
&\nu_j = \sum_{d=0}^{D_x} w^d_j \; ,& &\forall j \in [n] \label{eqc} \\
&\sum_{j \in [n]} w_j^d = \sigma^d \; ,& & \forall d \in \{ 0, \ldots, D_x \} \label{eqd} \\
&0 \leq w_j^0 \leq \frac{\sigma^0}{N_z} \; ,& & 
\forall j \in [n] \label{eqe} \\
&w_j^d = 0\; ,& &  \forall j \not \in \mathcal{F}_x, \forall d \in \{ 1, \ldots, D_x \} \label{eqf} \\
&0 \leq w_j^d \leq \frac{\sigma^d}{d} \; ,& & \forall j \in \mathcal{F}_x, \forall d \in \{ 1, \ldots, D_x \} \label{eqg} 
\end{align}
\end{subequations}
Then Problem \ref{eqn:ergodicPbMaxrank} is
equivalent to the following ergodic control problem
\begin{align} \label{eqn:ergodicPbMaxrankWellDescribed}
\inf_{\sigma, \nu, z} \limsup_{T \rightarrow +\infty} \frac{1}{T}
 \mathbb{E}(\sum_{t=0}^{T-1} \tilde{c}(X_t,\sigma_t, \nu_t, z_t)) \enspace ,
\end{align}
where the cost is defined as 
\[
\tilde{c}(x,\sigma, \nu,z)=c'_x + \gamma \frac{D_x-\sum_{d=0}^{D_x} d \sigma^d }{D_x} 
\]
and the transitions are 
\[
\tilde{p}(y|x,\sigma, \nu, z) = \alpha \nu_y + (1-\alpha) z_y \enspace .
\]
The admissible controls verify for all $t$, $(\sigma_t, \nu_t) \in \extr(\mathcal{P}_{X_t})$ (the set of extreme point of the polytope) and $z_t \in \extr(Z)$.

Indeed, to each action $(\sigma, \nu, z)$ of Problem~\eqref{eqn:ergodicPbMaxrankWellDescribed} corresponds a unique 
action $I$, $J$ of Problem~\eqref{eqn:ergodicPbMaxrank} and vice versa. Moreover, the respective transitions and costs are equal.
\end{prop}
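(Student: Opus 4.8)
The plan is to prove the asserted correspondence by computing the extreme points of the two control sets $Z$ and $\mathcal{P}_x$ explicitly, matching them with the combinatorial actions $(I,J)$, and then checking that the transition laws and costs agree term by term. First I would dispose of the teleportation polytope: the constraints $\sum_i z_i = 1$ and $0 \le z_i \le 1/N$ describe a hypersimplex, whose vertices are exactly the vectors with $N$ coordinates equal to $1/N$ and the rest equal to $0$. Thus $\extr(Z) = \{\, z(I) : I \subseteq [n],\ \abs{I}=N \,\}$, which is precisely the family of teleportation vectors of Problem~\eqref{eqn:ergodicPbMaxrank}, and $I \mapsto z(I)$ is a bijection onto $\extr(Z)$.

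The core of the argument is the description of $\extr(\mathcal{P}_x)$. Since $\mathcal{P}_x$ is the projection onto the $(\sigma,\nu)$ coordinates of the polytope $\hat{\mathcal{P}}_x$ carved out by \eqref{eqa}--\eqref{eqg} in the lifted variables $(\sigma,w)$, and every vertex of a projection is the image of a vertex of the source polytope, I would first characterize $\extr(\hat{\mathcal{P}}_x)$ and then push forward. The lifted polytope has a block structure: writing $w^d = \sigma^d u^d$, constraints \eqref{eqd}--\eqref{eqg} say exactly that $\sigma$ lies in the simplex on $\{0,\ldots,D_x\}$ and that each $u^d$ lies in a fixed polytope $P_d$, with $P_0 = Z$ for the teleportation block and $P_d = \{\, u \in \mathbb{R}^{\mathcal{F}_x} : \sum_j u_j = 1,\ 0 \le u_j \le 1/d \,\}$ for $d \ge 1$. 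Maximizing a linear functional over such a simplex of polytopes concentrates $\sigma$ on a single index $d^\ast$ and selects a vertex of $P_{d^\ast}$; hence every vertex of $\hat{\mathcal{P}}_x$ has $\sigma = e_{d^\ast}$ together with $\nu$ equal to a vertex of $P_{d^\ast}$. The vertices of $P_d$ for $d \ge 1$ are the uniform distributions $\nu(J)$ on the $d$-subsets $J \subseteq \mathcal{F}_x$, while the vertices of $P_0$ are the $z(I)$ with $\abs{I}=N$; these project to pairwise distinct $(\sigma,\nu)$, so they are exactly the vertices of $\mathcal{P}_x$.

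With both vertex sets in hand, I would define the correspondence. A combinatorial action with $J \ne \emptyset$ maps to $\bigl(e_{\abs{J}},\,\nu(J),\,z(I)\bigr)$, and the dangling action $(I,\emptyset)$ maps to $\bigl(e_0,\,z(I),\,z(I)\bigr)$, i.e.\ the $d^\ast=0$ vertex in which the continuation distribution $\nu$ is identified with the teleportation vector $z$. Checking equality of data is then direct: for $\sigma = e_{d^\ast}$ one has $\sum_{d} d\,\sigma^d = d^\ast = \abs{J}$, so $\tilde c = c'_x + \gamma (D_x - \abs{J})/D_x = c(x,I,J)$, and $\tilde p(y\mid x,\sigma,\nu,z) = \alpha\,\nu_y + (1-\alpha)\,z_y$ reproduces $\alpha\,\nu_y(I,J) + (1-\alpha)\,z_y(I)$ in both the $J\ne\emptyset$ and the $J=\emptyset$ cases.

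The delicate point, which I expect to be the main obstacle, is precisely the $d^\ast = 0$ block. Because $P_0$ is all of $Z$, the polytope $\mathcal{P}_x$ possesses vertices with $\nu = z(I')$ for \emph{every} $I'$, and when such a $\nu$ is paired with an independently chosen $z = z(I)$ with $I' \ne I$ one obtains an extreme action whose transition $\alpha\, z(I') + (1-\alpha)\, z(I)$ is produced by no single combinatorial action. The resolution is that this transition, carried at the fixed cost $c'_x + \gamma$, is the mixture with weights $\alpha$ and $1-\alpha$ of the two faithful dangling actions $\bigl(e_0,z(I'),z(I')\bigr)$ and $\bigl(e_0,z(I),z(I)\bigr)$, which carry the same cost; it is therefore a randomization of images of combinatorial actions and, since (as recalled above) the ergodic value is attained over randomized stationary strategies and depends only on the convex hull of the per-state (transition, cost) pairs, it leaves the value of \eqref{eqn:ergodicPbMaxrankWellDescribed} unchanged. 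Restricting to the faithful identification $\nu = z$ when $\sigma = e_0$ then yields the claimed bijection with equal transitions and costs, and the two ergodic control problems share the same value and corresponding optimal stationary policies.
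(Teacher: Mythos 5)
Your proof is correct, and its skeleton coincides with the paper's: identify $\extr(Z)$ with the vectors $z(I)$, recognize that the lifted constraints \eqref{eqc}--\eqref{eqg} encode the disjunctive (Balas) hull of the blocks $K_0=Z$ and $K_d=\{\nu : \sum_j \nu_j=1,\ 0\le\nu_j\le 1/d \text{ on } \mathcal{F}_x,\ \nu_j=0 \text{ off } \mathcal{F}_x\}$, show every vertex of $\mathcal{P}_x$ has $\sigma=e_{d^*}$ with $\nu$ a vertex of the $d^*$-block, and match costs via $\sum_d d\,\sigma^d=\abs{J}$. The two differences are worth recording. First, where the paper simply invokes Corollary~2.1.2 of Balas's disjunctive programming theory for both directions of the vertex characterization, you reprove it from scratch via the substitution $w^d=\sigma^d u^d$ and the unique-maximizer characterization of vertices; this makes the argument self-contained, though your closing inference that the candidate points ``project to pairwise distinct $(\sigma,\nu)$, so they are exactly the vertices'' is not valid as literally stated (a projection of a vertex need not be a vertex of the projected polytope) --- the correct one-line fix is already implicit in your setup: any convex combination producing $(e_{d^*},v)$ must, by the $\sigma$-coordinate, be supported on block $d^*$, and extremality of $v$ in $P_{d^*}$ then forces triviality. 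Second, and more valuably, you flag and repair a point the paper's proof passes over in silence: when $\sigma=e_0$ the well-described problem has extreme actions with $\nu=z(I')$ paired with an independently chosen $z=z(I)$, $I'\ne I$, whose transition $\alpha z(I')+(1-\alpha)z(I)$ matches no single action $(I,J)$ of \eqref{eqn:ergodicPbMaxrank}, so the proposition's literal claim of a unique two-way correspondence fails for these mismatched pairs; your observation that all such actions carry the common cost $c'_x+\gamma$ and that their (transition, cost) pair is the $(\alpha,1-\alpha)$-mixture of two faithful dangling actions --- hence a randomization that cannot alter the ergodic value --- is exactly the argument needed to make the equivalence of the two control problems rigorous, and it strengthens rather than merely reproduces the published proof.
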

\begin{proof}
Fix a page $x$ in $[n]$.
The extreme points of $Z$ are the vectors of $\mathbb{R}^n$ with $N$ coordinates equal to
$\frac{1}{N}$ and the other ones equal to 0. Hence $z(I) \in \extr(Z)$
and for each extreme point $z'$ of $Z$ there exists $I \in [n]$ such that
$\abs{I}=N$ and $z'=z(I)$.
We shall also describe the set of extreme points of $\mathcal{P}_x$.

From the theory of disjunctive linear programming~\cite{Balas-disjunctiveOpt},
we can see that the polytope $\mathcal{K}=\{ \nu \; | \; (\sigma, \nu) \in \mathcal{P}_x \}$ 
is the convex hull of the union of $D_x+1$ polytopes that we will denote $K_d$,
$d \in \{0, 1 \ldots D_x \}$. If $d=0$, then $K_0=Z$.
If $d>0$, $K_d = \{\nu \in \mathbb{R}^n \; | \; \sum_{j \in [n]} \nu_j=1, \; 0 \leq \nu_j \leq \frac{1}{d}, \forall j \in \mathcal{F}_x,\; \nu_j =0 , \forall j \not \in \mathcal{F}_x \}$.

Let $(\sigma, \nu)$
be an extreme point of $\mathcal{P}_x$. By Corollary~2.1.2-ii) in~\cite{Balas-disjunctiveOpt},
 there exists $d^*$ such that
$\sigma^{d^*}=1$ and $\nu$ is an extreme point of $\mathcal{K}$.
As $\sigma^{d^*}=1$ and $\sigma_d=0$ for $d \not = d^*$, we conclude that $\nu$ is also an extreme point of $K_{d^*}= \mathcal{P}_x \cap \{ \sigma | \sigma^{d^*}=1 \}$. 
If $d^*=0$, $\nu \in \extr(Z)$ and if $d^*>0$, the extreme points of $K_{d^*}$ correspond exactly to the vectors with 
$d^*$ coordinates in $\mathcal{F}_x$ equal to
$\frac{1}{d^*}$ and the other ones equal to 0.
Hence there exists $I \subseteq [n]$ and $J \subseteq \mathcal{F}_x$
such that $\abs{I}=N$, $\abs{J}=d^*$ and $\nu=\nu(I,J)$.

Conversely, fix $I$ and $J$ and let $\sigma(J)$ be such that $\sigma^d(J)=1$ if and only if $d=\abs{J}$.
Then $(\sigma(J), \nu(I,J))$ is an extreme point of $\mathcal{P}_x$ by Corollary~2.1.2-i) in~\cite{Balas-disjunctiveOpt}.

Finally the costs are the same since $\abs{J}=\sum_{d=0}^{D_x} d \sigma^d$.
\end{proof}

%

\begin{prop} \label{prop:welldesc}
If $\alpha$, $\gamma$ and $c'_i$, $i \in [n]$ are rational numbers, then 
the ergodic control problem~\eqref{eqn:ergodicPbMaxrankWellDescribed} is the average cost infinite horizon problem for a well described Markov decision process and it is polynomial time solvable.
\end{prop}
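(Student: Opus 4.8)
The plan is to verify that the reformulated problem~\eqref{eqn:ergodicPbMaxrankWellDescribed} literally fits Definition~\ref{defn:welldescribedMDP}, and then invoke Theorem~3 of~\cite{Fercoq-PRopt} to conclude polynomial-time solvability. Concretely, I would take the state space to be $I=[n]$, and for each state $x$ the action set to be $A_x = \extr(\mathcal{P}_x) \times \extr(Z)$, so that an action is a vector $a=(\sigma,\nu,z) \in \mathbb{R}^{L_x}$ with $L_x = (D_x+1) + 2n$. The first thing to check is that the cost $\tilde c$ and the transition law $\tilde p$ are affine (indeed linear) in the action coordinates: from the formulas $\tilde c(x,\sigma,\nu,z)=c'_x + \gamma(D_x - \sum_d d\,\sigma^d)/D_x$ and $\tilde p(y|x,\sigma,\nu,z)=\alpha\nu_y+(1-\alpha)z_y$, one reads off the rational coefficients $C_x^l$ and $Q_{x,y}^l$ required by the definition, using the rationality of $\alpha$, $\gamma$ and the $c'_x$. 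The only mild subtlety is that $\tilde c$ has a constant term $c'_x + \gamma$; this is handled exactly as in the well-described MDP formalism, since a state-dependent additive constant in the cost does not affect well-descability (or one absorbs it by noting $\sum_d \sigma^d = 1$ from~\eqref{eqa}, so the constant is itself linear in $\sigma$).

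Next I would identify the convex hulls of the action sets. By construction $\mathrm{conv}(\extr(\mathcal{P}_x)) = \mathcal{P}_x$ and $\mathrm{conv}(\extr(Z)) = Z$, so $\mathrm{conv}(A_x) = \mathcal{P}_x \times Z$, which is a polytope given explicitly by the rational inequality systems~\eqref{eqa}--\eqref{eqg} together with the defining inequalities of $Z$. One then observes that each such inequality has bounded encoding length $\phi$, polynomial in the input, since the coefficients are among $\{0,\pm 1, 1/N, 1/d\}$ and $N,d \le n$; this establishes the facet-complexity bound and hence that $(\mathcal{P}_x \times Z; L_x, \phi)$ is a well-described polyhedron. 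Because the systems are given explicitly with polynomially many inequalities, the trivial ``check each inequality'' procedure already furnishes a polynomial-time strong separation oracle, which is the last ingredient needed in Definition~\ref{defn:welldescribedMDP}.

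The point I would treat most carefully is that the actions in~\eqref{eqn:ergodicPbMaxrankWellDescribed} range over the \emph{extreme points} $\extr(\mathcal{P}_x)$ rather than over $\mathcal{P}_x$ itself, whereas the well-described formalism asks for a polyhedron whose \emph{convex hull of the action set} is well described. Here Proposition~\ref{prop:rewriting} does the work: it guarantees $A_x = \extr(\mathcal{P}_x)\times\extr(Z)$ is a finite set in bijection with the original finite actions $(I,J)$, so $A_x$ is indeed a finite action set, and its convex hull is the well-described polytope $\mathcal{P}_x \times Z$. This is exactly the situation the well-described MDP framework is designed for, and it is the only place where one must be vigilant that the hypotheses match. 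Once this correspondence is in place, all of Definition~\ref{defn:welldescribedMDP} is satisfied with total encoding length polynomial in $n$ and the bit-sizes of $\alpha,\gamma,(c'_x)$, and Theorem~3 of~\cite{Fercoq-PRopt} yields the claimed polynomial-time solvability.
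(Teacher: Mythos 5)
Your proposal is correct and takes essentially the same route as the paper's own proof: verify that the polytopes $\mathcal{P}_x$ and $Z$ are well-described via their explicit polynomial-size rational inequality systems (with the trivial ``test each inequality'' separation oracle), that the costs and transitions are linear with rational coefficients thanks to the rationality of $\alpha$, $\gamma$, $c'_x$, and then invoke Theorem~3 of \cite{Fercoq-PRopt}. You are in fact slightly more careful than the paper on two points it glosses over, namely absorbing the affine constant in the cost using $\sum_d \sigma^d = 1$ and the passage from the action set $\extr(\mathcal{P}_x)\times\extr(Z)$ to its convex hull $\mathcal{P}_x\times Z$.
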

\begin{proof}
 Clearly, the process described is a Markov decision process. 
As the polytopes $\mathcal{P}_i$, $i \in [n]$ and $Z$ are described by
a polynomial number of inequalities with at most $n+1$ terms in each, 
they are
well described. Indeed, the separation oracle consisting simply in testing each inequality terminates in polynomial time.
The cost and transitions are linear functions on those polytopes with rational coefficients since $\alpha$ and $c'_i$, $i \in [n]$ are rational numbers.
Thus the Markov decision process is well described. By Theorem~3 in~\cite{Fercoq-PRopt}, Problem~\eqref{eqn:ergodicPbMaxrankWellDescribed}
is thus solvable in polynomial time.
\end{proof}

\begin{prop} \label{prop:dynprog}
The dynamic programming equation
\begin{multline} \label{eqn:progdyn}
v_i + \lambda = \min_{ (\sigma, \nu) \in \mathcal{P}_{i}, z \in Z} 
c'_i + \gamma \frac{D_i-\sum_{d=0}^{D_i} d \sigma^d }{D_i} \\+ 
\sum_{j \in [n]} (\alpha \nu_j + (1-\alpha) z_j) v_j \; , \quad \forall i \in [n]
\end{multline}
has a~solution $v \in \mathbb{R}^n$ and $\lambda \in \mathbb{R}$. 
The constant $\lambda$ is unique and is the value of problem~\eqref{eqn:ergodicPbMaxrank}. 
An optimal strategy is obtained by selecting
for each state~$i$, $(\sigma,\nu) \in \mathcal{P}_i$ and $z \in Z$ maximizing Equation~\eqref{eqn:progdyn}.
The function $v$ is called the \NEW{bias}.
\end{prop}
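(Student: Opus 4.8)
The plan is to establish this as a standard existence/uniqueness result for the ergodic dynamic programming (Bellman) equation of an average-cost Markov decision process, leveraging the structural results already in hand. By Proposition~\ref{prop:rewriting}, Problem~\eqref{eqn:ergodicPbMaxrank} is equivalent to the well-described MDP~\eqref{eqn:ergodicPbMaxrankWellDescribed}, and by Proposition~\ref{prop:welldesc} this MDP is well described with rational data. The first thing I would do is verify that the minimization over $(\sigma,\nu)\in\mathcal{P}_i$ and $z\in Z$ on the right-hand side of~\eqref{eqn:progdyn} is equivalent to a minimization over the \emph{extreme points} of these polytopes. This holds because, for fixed $v$, the objective is \emph{linear} in $(\sigma,\nu,z)$: the cost $\gamma(D_i-\sum_d d\sigma^d)/D_i$ and the term $\sum_j(\alpha\nu_j+(1-\alpha)z_j)v_j$ are affine, so the infimum of a linear function over a polytope is attained at an extreme point. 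This reconciles the continuous description~\eqref{eqn:progdyn} with the finite-action MDP of Proposition~\ref{prop:rewriting}, whose admissible controls range over $\extr(\mathcal{P}_{X_t})$ and $\extr(Z)$.

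Having reduced to a genuine finite MDP, I would invoke the classical average-cost theory. The key technical point to check is \emph{unichain-ness}: because every admissible transition law has the form $\tilde{p}(y\,|\,x,\sigma,\nu,z)=\alpha\nu_y+(1-\alpha)z_y$ with $z\in Z$ and every $z$ in $Z$ is a convex combination of teleportation vectors supported on $N$ pages with positive mass $1/N$, the teleportation component injects strictly positive probability mass (weight $1-\alpha>0$, assuming $\alpha<1$) toward a set of $N$ states at every step. One then argues that under any stationary policy the induced Markov chain has a single recurrent class, so the MDP is unichain. In the unichain case, Theorem~8.4.3 (or the analogous result) in~\cite{puterman-mdp} guarantees that the ergodic dynamic programming equation~\eqref{eqn:progdyn} admits a solution $(v,\lambda)$, that $\lambda$ is unique and equals the optimal average cost, and that a stationary optimal policy is obtained by selecting minimizing actions in the right-hand side. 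Since the value is independent of the initial distribution in the unichain case, $\lambda$ coincides with the value of~\eqref{eqn:ergodicPbMaxrank} via the equivalence of Proposition~\ref{prop:rewriting}.

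I expect the main obstacle to be the careful justification of the unichain property, and more subtly the regime $\alpha=1$ where teleportation no longer forces irreducibility. If $\alpha<1$, unichain-ness (indeed irreducibility toward the teleportation support) is immediate. If one wishes to cover $\alpha=1$, the statement may require the more general multichain average-cost theory, in which case the existence of a solution to~\eqref{eqn:progdyn} with a \emph{scalar} $\lambda$ already presupposes unichain structure; here one would instead appeal to the multichain results underlying Theorem~3 of~\cite{Fercoq-PRopt} and note that the scalar form of the equation is what is being asserted. The uniqueness of $\lambda$ follows from its identification with the optimal average cost, which is policy-independent of any gauge freedom; the bias $v$ is determined only up to an additive constant, consistent with the fact that adding a multiple of the all-ones vector $e$ to $v$ leaves~\eqref{eqn:progdyn} invariant, since $\sum_j(\alpha\nu_j+(1-\alpha)z_j)=1$ for every admissible action. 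I would close by remarking that the optimal stationary policy so obtained corresponds, through Proposition~\ref{prop:rewriting}, to an optimal choice of the sets $I_t$ and $J_t$ in the original hyperlink-deletion problem.
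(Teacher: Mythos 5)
Your proposal follows the paper's proof essentially step for step: apply Theorem~8.4.3 of \cite{puterman-mdp} to the unichain ergodic problem~\eqref{eqn:ergodicPbMaxrankWellDescribed}, whose admissible actions are the extreme points $\extr(\mathcal{P}_i)$ and $\extr(Z)$, then pass from $\extr(\mathcal{P}_i)$ to $\mathcal{P}_i$ by affinity of the objective (the minimum of an affine function over a polytope is attained at an extreme point), and finally transfer the conclusion to Problem~\eqref{eqn:ergodicPbMaxrank} via the equivalence of Proposition~\ref{prop:rewriting}. Your added discussion of unichain-ness is more than the paper offers (it simply asserts the property), though your claim that it is ``immediate'' for $\alpha<1$ is slightly optimistic, since the teleportation support here is action- and state-dependent, so a genuine argument needs something like $2N>n$ to force any two supports of size $N$ to intersect --- a gap the paper shares rather than resolves.
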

\begin{proof}
Theorem~8.4.3 in~[Put94] applied to the unichain ergodic control
problem~\eqref{eqn:ergodicPbMaxrankWellDescribed} implies the result of the proposition but with $\mathcal{P}_i$
replaced by $\extr(\mathcal{P}_i)$. But as the expression which is maximized is affine,
using $\mathcal{P}_i$ or $\extr(\mathcal{P}_i)$ yields the same solution.
Proposition~\ref{prop:rewriting} gives the equivalence between \eqref{eqn:ergodicPbMaxrank} and \eqref{eqn:ergodicPbMaxrankWellDescribed}
\end{proof}


\begin{prop}  \label{prop:contrrate}
 Let $T$ be the dynamic programming operator $\mathbb{R}^n\to \mathbb{R}^n$ defined by
\begin{equation*}
T_i(v) = \min_{ (\sigma, \nu) \in \mathcal{P}_{i}} 
c'_i + \gamma \frac{D_i-\sum_{d=0}^{D_i} d \sigma^d }{D_i} + 
\alpha \sum_{j \in [n]} \nu_j v_j \; , \; \forall i \in [n] .
\end{equation*}
The map $T$ is $\alpha$-contracting in the sup norm and its fixed point $v$, which is unique,  is such that
$(v, (1-\alpha) \min_{z \in Z} z \cdot v)$ is solution of the ergodic dynamic programming equation~\eqref{eqn:progdyn}.
\end{prop}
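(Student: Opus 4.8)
The plan is to recognize $T$ as a discounted Bellman operator and to exploit the separable structure of the teleportation control. First I would check that for every $(\sigma,\nu) \in \mathcal{P}_i$ the vector $\nu$ is a probability distribution: summing \eqref{eqc} over $j$ and using \eqref{eqd} and \eqref{eqa} gives $\sum_{j\in[n]} \nu_j = \sum_{d=0}^{D_i} \sigma^d = 1$, while $\nu_j \geq 0$ follows from $w_j^d \geq 0$ in \eqref{eqe}--\eqref{eqg}. Consequently, for a fixed action $(\sigma,\nu)$ the inner expression defining $T_i$ is affine in $v$, and its linear part $v \mapsto \alpha\, \nu \cdot v$ satisfies $\lvert \alpha\, \nu\cdot(v-v')\rvert \leq \alpha \norm{v-v'}_\infty$ precisely because $\nu$ is stochastic. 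Note also that $\mathcal{P}_i$ is bounded, so the minimum is attained.

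Next I would derive the contraction bound in the standard way for a minimum of uniformly Lipschitz affine maps. Fixing $i$ and letting $(\sigma^*,\nu^*)$ attain the minimum defining $T_i(v)$, I would write, with $\tilde c$ denoting the cost part (which does not depend on $v$), the inequality $T_i(v') \leq \tilde c + \alpha\, \nu^*\cdot v' = T_i(v) + \alpha\, \nu^*\cdot(v'-v) \leq T_i(v) + \alpha \norm{v-v'}_\infty$. Exchanging the roles of $v$ and $v'$ gives $\lvert T_i(v) - T_i(v')\rvert \leq \alpha \norm{v-v'}_\infty$ for every $i$, hence $\norm{T(v)-T(v')}_\infty \leq \alpha \norm{v-v'}_\infty$. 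Since $\alpha<1$, Banach's fixed point theorem yields a unique fixed point $v$ of $T$.

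Finally I would verify that this $v$ solves \eqref{eqn:progdyn} with $\lambda := (1-\alpha)\min_{z\in Z} z\cdot v$. The decisive observation is that in \eqref{eqn:progdyn} the cost and the $\nu$-term depend only on $(\sigma,\nu)\in\mathcal{P}_i$, whereas the teleportation term $(1-\alpha)\, z\cdot v$ depends only on $z\in Z$; since the two controls range over independent sets, the joint minimum splits as $T_i(v) + (1-\alpha)\min_{z\in Z} z\cdot v$. As $T_i(v)=v_i$ by the fixed-point property, and the second summand is a constant independent of $i$, the right-hand side of \eqref{eqn:progdyn} equals $v_i + \lambda$, which is exactly the asserted equation. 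I expect this decoupling to be the conceptual heart of the argument: it is what allows a genuinely contracting (discounted) operator to produce a solution of the undiscounted ergodic equation, with the additive constant $\lambda$ absorbing the optimal teleportation contribution. I would also point out that $v$ is unique as a fixed point of $T$ even though the bias solving \eqref{eqn:progdyn} is only determined up to an additive constant; the operator $T$ merely selects one distinguished representative.
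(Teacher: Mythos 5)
Your proof is correct and takes essentially the same approach as the paper: establish that $T$ is $\alpha$-contracting because each $\nu$ with $(\sigma,\nu)\in\mathcal{P}_i$ is a probability vector (the paper deduces the contraction from monotonicity together with $T(v+\lambda e)=T(v)+\alpha\lambda e$, while you use the equivalent minimizer-swap estimate), and then observe that the minimum in \eqref{eqn:progdyn} decouples over the independent control sets $\mathcal{P}_i$ and $Z$, producing the constant $(1-\alpha)\min_{z\in Z} z\cdot v$. The decoupling you single out as the conceptual heart is exactly the paper's concluding step.
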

\begin{proof}
 The set $\{\nu \text{ st: } (\sigma, \nu) \in \mathcal{P}_i\}$ is a~set of probability measures so $\lambda \in \mathbb{R} \Rightarrow T(v+\lambda) = T(v)+\alpha \lambda$ and $v \geq w \Rightarrow T(v) \geq T(w)$.
This implies that $T$ is $\alpha$-contracting. Let $v$ be its fixed point. For all $i \in [n]$, 
\begin{multline*}
v_i + (1-\alpha) \min_{z \in Z} z \cdot v \\
= \!\!\! \min_{ (\sigma, \nu) \in \mathcal{P}_{i}, z \in Z} \!\!
c'_i + \gamma \frac{D_i-\sum_{d=0}^{D_i} d \sigma^d }{D_i} + 
\sum_{j \in [n]}\! \alpha \nu_j v_j + (1-\alpha) z_j v_j
\end{multline*}
We get equation \eqref{eqn:progdyn} with constant $(1-\alpha) \min_{z\in Z} z \cdot v$.
\end{proof}

We can then solve the dynamic programming equation~\eqref{eqn:progdyn} and so the
ergodic control problem~\eqref{eqn:ergodicPbMaxrank} 
by value iteration (outer loop of Algorithm~\ref{alg:maxrank}).

The algorithm starts with an~initial potential function $v$, 
scans repeatedly the pages and updates $v_i$ when $i$ is the current
page according to $v_i \leftarrow T_i(v)$
until convergence is reached.
Then $(v , (1-\alpha) \min_{z \in Z} z v)$ is solution of the ergodic dynamic programming equation~\eqref{eqn:progdyn} and an optimal linkage strategy is recovered by selecting the
maximizing $(\sigma, \nu)$ at each page. An optimal teleportation vector is recovered by selecting
a maximizing $z$ in $\min_{z \in Z} z v$.

Thanks to the damping factor $\alpha$, the iteration can be seen to be $\alpha$-contracting if the pages are scanned in a~cyclic order.
Thus the algorithm converges in a~number of steps independent of the
dimension of the web graph.

For the evaluation of the dynamic programming operator $T$ at a page $i$
(inner for-loop of Algorithm~\ref{alg:maxrank}),
 we remark that for a fixed $0-1$ valued $\sigma$,
that is for a fixed number of removed hyperlinks, the operator $T_i$ 
maximizes a linear function on a hypercube,
which reduces essentially to a sort.
Any extreme point $(\sigma, \nu)$ of $\mathcal{P}_i$ necessarily
verifies that $\sigma$ is  $0-1$ valued. Thus we just need to choose the best value of $\sigma$ among the $D_i +1$ possibilities.

\begin{algorithm}
\caption{MaxRank algorithm}
\label{alg:maxrank}
 \begin{algorithmic}[1]
\STATE  Initialization: $v \in \mathbb{R}^n$
\WHILE{$\norm{v-T(v)}_{\infty} \geq \epsilon$}
\STATE Sort $(v_l)_{l \in [n]}$ in increasing order and let $\phi : [n]  \rightarrow [n]$ be the sort  function so that 
$v_{\phi(1)} \leq \dots\leq v_{\phi(n)}$.
\STATE $\lambda \leftarrow \frac{1-\alpha}{N} \sum_{j=1}^N v_{\phi(j)}$
\FOR{$i$ from 1 to $n$}
\STATE $w_i^0 \leftarrow c_i' +\gamma +\frac{\alpha}{1-\alpha}\lambda$
\STATE Sort $(v_j)_{j \in \mathcal{F}_i}$ in increasing order and let $\psi : \mathcal{F}_i  \rightarrow \{1, \ldots, \abs{\mathcal{F}_i} \}$ such that 
$v_{\psi(1)} \leq \dots \leq v_{\psi(\abs{\mathcal{F}_i})}$.
\FOR{$d$ from 1 to $D_i$}
\STATE $w_i^d \leftarrow c_i' + \gamma \frac{D_i-d}{D_i} + \frac{\alpha}{d} \sum_{j=1}^{d} v_{\psi(j)}$
\ENDFOR
\STATE $v_i \leftarrow T_i(v) = \min_{d \in \{0, 1, \ldots, D_i \}} w_i^d$
\ENDFOR
\ENDWHILE
 \end{algorithmic}
\end{algorithm}

This very efficient algorithm is highly scalable:  we used it for our experimental results
on a large size dataset (Section~\ref{sec:num}).

The following proposition shows that if $\gamma$ is too big, then the optimal
link removal strategy is trivial. It also gives an interpretation of the bias vector
in terms of number of visits of spam pages.

\begin{prop}
If $\gamma > \frac{2\alpha}{1-\alpha}\norm{c}_{\infty}$, then no link should be removed.
Moreover if in addition, $c'_i=1$ when $i$ is a spam page and 0 otherwise, then
the $i$th coordinate of the fix point of operator $T$ in Proposition~\ref{prop:contrrate}
is equal to the expected mean number of spam pages visited before teleportation when starting
a walk from page $i$.
\end{prop}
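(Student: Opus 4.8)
The plan is to reduce the operator $T$ to the explicit finite minimization already exploited in Algorithm~\ref{alg:maxrank} and then to compare, page by page, the value of keeping all links against that of any link-removal choice. By Proposition~\ref{prop:rewriting} every extreme point of $\mathcal{P}_i$ has a $0$--$1$ valued $\sigma$, and since the objective is affine (Proposition~\ref{prop:dynprog}) the minimum defining $T_i(v)$ may be taken over these extreme points only. Thus $T_i(v)=\min_{d\in\{0,\ldots,D_i\}}w_i^d$, where for $d\ge 1$, spreading $\nu$ uniformly over the $d$ smallest values of $v$ on $\mathcal{F}_i$ gives
\[
w_i^d=c'_i+\gamma\frac{D_i-d}{D_i}+\frac{\alpha}{d}\sum_{j=1}^{d}v_{\psi(j)},
\]
and $w_i^0=c'_i+\gamma+\alpha\min_{\nu\in Z}\nu\cdot v$, exactly as in the inner loop of the algorithm. ``No link should be removed'' means precisely that the minimum is attained at $d=D_i$ for every $i$.

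First I would prove Part~1. Writing $S_d=\sum_{j=1}^{d}v_{\psi(j)}$, the desired inequality $w_i^{D_i}\le w_i^{d}$ for $1\le d<D_i$ is equivalent to $\alpha\bigl(\frac{S_{D_i}}{D_i}-\frac{S_d}{d}\bigr)\le\gamma\frac{D_i-d}{D_i}$. The key estimate is that, because $v_{\psi(1)}\le\cdots\le v_{\psi(D_i)}$, the $D_i-d$ discarded values are each at most $\max_k v_k$ while the average of the kept ones is at least $\min_k v_k$; a short rearrangement then yields $\frac{S_{D_i}}{D_i}-\frac{S_d}{d}\le\frac{D_i-d}{D_i}\,\mathrm{span}(v)$ with $\mathrm{span}(v)=\max_k v_k-\min_k v_k$. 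Hence it suffices to show $\alpha\,\mathrm{span}(v)\le\gamma$. The teleportation case $d=0$ is handled directly: its $\nu$ lies in $Z$, so $\alpha\,\nu\cdot v\ge\alpha\min_k v_k$ and $w_i^{D_i}-w_i^0\le\alpha\,\mathrm{span}(v)-\gamma$, again negative under the same bound.

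The span estimate is the main obstacle. Using $v=T(v)$ and that $\nu$ is always a probability measure, any admissible action gives $v_i\ge c'_i+\alpha\min_k v_k\ge -\norm{c}_\infty+\alpha\min_k v_k$, and minimizing over $i$ yields $\min_k v_k\ge -\norm{c}_\infty/(1-\alpha)$. For the upper bound I would evaluate $T_i$ at the zero-penalty action that keeps every link (for a page with no out-links, at the teleportation action, which likewise incurs no removal penalty): this gives $v_i\le c'_i+\alpha\max_k v_k\le\norm{c}_\infty+\alpha\max_k v_k$, hence $\max_k v_k\le\norm{c}_\infty/(1-\alpha)$. Therefore $\mathrm{span}(v)\le\frac{2}{1-\alpha}\norm{c}_\infty$, so $\alpha\,\mathrm{span}(v)\le\frac{2\alpha}{1-\alpha}\norm{c}_\infty<\gamma$ by hypothesis; the comparisons above are then strict for every $d<D_i$, so removing any link is strictly suboptimal.

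For Part~2, once no link is removed the operator collapses, at every page with out-links, to the affine fixed-point equation $v_i=c'_i+\frac{\alpha}{D_i}\sum_{j\in\mathcal{F}_i}v_j$, that is $v=c'+\alpha S v$ with $S$ the original link matrix. Since $\norm{\alpha S}_\infty=\alpha<1$, the unique solution is $v=(I-\alpha S)^{-1}c'=\sum_{k\ge 0}\alpha^k S^k c'$. Taking $c'_i=1$ on spam pages and $0$ otherwise, the coordinate-$i$ value of the $k$-th term is $\alpha^k\sum_j(S^k)_{ij}c'_j$: the factor $\alpha^k$ is the probability of not teleporting during $k$ successive link-clicks, $(S^k)_{ij}$ is the probability of being at $j$ after those clicks, and $c'_j$ selects the spam pages. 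Summing over $k$ therefore counts the expected number of spam pages encountered by the walk that clicks links uniformly and teleports (ends the current excursion, as does reaching a page with no out-links) with probability $1-\alpha$ at each step, started at $i$, which is exactly the asserted quantity. The remaining work is only the bookkeeping of this Neumann series, the span estimate of Part~1 being the sole genuine difficulty.
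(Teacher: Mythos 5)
Your proof is correct, and its core is the same one-step comparison the paper makes: at the fixed point $v=T(v)$, keeping all links beats any removal because the gain in the continuation term $\alpha\nu\cdot v$ is dominated by the saved penalty $\gamma\frac{D_i-\abs{J}}{D_i}$, with the threshold $\frac{2\alpha}{1-\alpha}\norm{c}_{\infty}$ coming from a bound of order $\frac{2\norm{c}_\infty}{1-\alpha}$ on the variation of $v$. Where you differ is in how the two ingredients are supplied. The paper bounds $\alpha\abs{\nu v-\nu^0 v}$ by splitting it into the term $(\frac{1}{\abs{J}}-\frac{1}{D_i})\sum_{j\in J}v_j$ plus the contribution of the discarded links, and it imports the normalization $\norm{v}_\infty\le\norm{c}_\infty/(1-\alpha)$ by citation to the PageRank-optimization results~\cite{NinKer-PRopt, Fercoq-PRopt}; you instead organize the comparison through the sorted quantities $w_i^d$ of Algorithm~\ref{alg:maxrank}, prove the (formally sharper, normalization-invariant) span estimate $\frac{S_{D_i}}{D_i}-\frac{S_d}{d}\le\frac{D_i-d}{D_i}\,(\max_k v_k-\min_k v_k)$, and derive both one-sided bounds $\min_k v_k\ge -\norm{c}_\infty/(1-\alpha)$ and $\max_k v_k\le\norm{c}_\infty/(1-\alpha)$ directly from the fixed-point equation (using the zero-penalty keep-all action for the upper bound) --- all steps I checked and which are sound. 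Likewise, for the second assertion the paper simply cites~\cite{NinKer-PRopt}, whereas you re-derive the interpretation from the Neumann series $v=(I-\alpha S)^{-1}c'=\sum_{k\ge0}\alpha^k S^k c'$, which is legitimate since optimality of $d=D_i$ collapses $T$ to the affine map $v\mapsto c'+\alpha Sv$. What your route buys is self-containedness and an argument phrased in the span, which is the natural invariant of the bias (defined only up to an additive constant); the paper's route is shorter because it leans on earlier results. One detail to flag: pages with $D_i=0$ are not covered by the paper's cost formula (the penalty is $0/0$ there), and your convention --- forced teleportation with no removal penalty --- is the natural one, but then the fixed-point equation at such a page reads $v_i=c'_i+\alpha\min_{z\in Z}z\cdot v$ rather than $v_i=c'_i$, so your parenthetical claim that reaching a dangling page ends the excursion is exactly as informal as the paper's own citation-based treatment of Part~2, and harmless for the result.
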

\begin{proof}
 Proposition~\ref{prop:contrrate} gives a normalization of the bias vector
such that it cannot take any value bigger than $\frac{\norm{c}_{\infty}}{1-\alpha}$,
as it is the case for PageRank optimization~\cite{NinKer-PRopt, Fercoq-PRopt}.

Now fix $i \in [n]$. Let $\nu^0=\nu(\mathcal{F}_i)$ and $\nu=\nu(I,J)$ for $J \subset \mathcal{F}_i$.
If $J= \emptyset$, then 
\[
\alpha \abs{\nu v - \nu^0 v} \leq \alpha \abs{\nu v}+\alpha \abs{\nu^0 v}
\leq  \frac{2\alpha}{1-\alpha}\norm{c}_{\infty} \frac{D_i-\abs{\emptyset}}{D_i} \enspace .
\]
If $J \not= \emptyset$, then
\begin{multline*}
 \alpha \abs{\nu v - \nu^0 v} \leq \alpha \abs{(\frac{1}{\abs{J}}-\frac{1}{D_i})\sum_{j \in J} v_j} 
+ \alpha \abs{\frac{1}{D_i}\sum_{j \in \mathcal{F}_i} v_j} \\
\leq \frac{D_i-\abs{J}}{\abs{J} D_i}
\frac{\alpha \norm{c}_{\infty}}{1-\alpha} + \frac{D_i-\abs{J}}{D_i}\frac{\alpha \norm{c}_{\infty}}{1-\alpha} \leq \gamma \frac{D_i-\abs{J}}{D_i}.
\end{multline*}
This proves that choosing $J=\mathcal{F}_i$ is always the best strategy when
 $\gamma > \frac{2\alpha}{1-\alpha}\norm{c}_{\infty}$. 

When no link is removed and $c'$ is defined as in the proposition,
we are in the framework of~\cite{NinKer-PRopt}, where it is shown that
the $i$th coordinate of the fix points of the operator $T$ is equal to the expected 
mean number of visits before teleportation when starting
a walk from page $i$.
\end{proof}

\section{Spam detection and PageRank\\ demotion} \label{sec:num}

We performed numerical experiments on the WEBSPAM-UK2007 dataset \cite{webspam2007}.
This is a crawl of the .uk websites with $n=105,\!896,\!555$ pages performed in 2007,
associated with lists of hosts classified as spam, nonspam and borderline. There is
a training dataset for the setting of the algorithm and a test dataset
to test the performance of the algorithm.

We took $\gamma=4$, $\alpha=0.85$, $N=0.89 n$, $c'_i=1$ if $i$ is a spam page of the training dataset,
 $c'_i=-0.2$ if $i$ is a nonspam page of the training dataset and $c'_i=0$ otherwise.
Then we obtained a MaxRank score and the associated bias vector.
We also computed TrustRank and AntiTrustRank with the training dataset as the seed sets.
We used the Webgraph framework~\cite{BoldiVigna-webgraph}, so that we could
manage the computation on a personal computer with four Intel Xeon CPUs at 2.98~Ghz and 8~GB RAM.
We coded the algorithm in a parallel fashion thanks to the OpenMP library. Computation took around 6 minutes for each evaluation of the
dynamic programming operator of Proposition~\ref{prop:contrrate}
and 6 hours for 60 such iterations (precision on the objective $\alpha^{60} \leq 6.10^{-5}$).
By comparison, PageRank computation with the same precision required 1.3 hour on the same computer,
which is of the same order of magnitude.

Figure~\ref{fig:maxrankvalues} gives the values taken by the bias vector.
 Figure~\ref{fig:spamdetection} compares the precision and recall
 of PageRank, TrustRank, AntiTrustRank and MaxRank bias for spam or non spam detection.
Precision and recall are the usual measures of the quality 
of an information retrieval algorithm~\cite{BaezaYates-modernIR}.
Precision is the probability that a randomly selected retrieved document is relevant.
Recall is the probability that a randomly selected relevant document is retrieved in a search.
These values were obtained using the training and the test sets.
Figure~\ref{fig:spamdemotion} compares TrustRank and MaxRank scores for PageRank demotion.

\begin{figure}
 \centering
\includegraphics[width=26em]{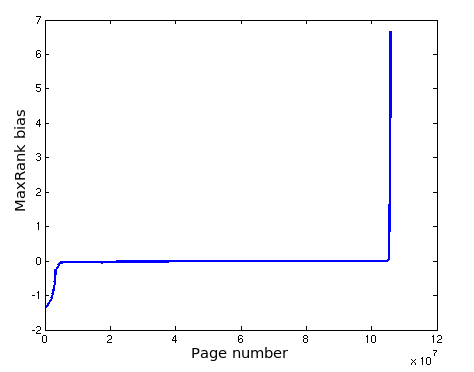}
\includegraphics[width=26em]{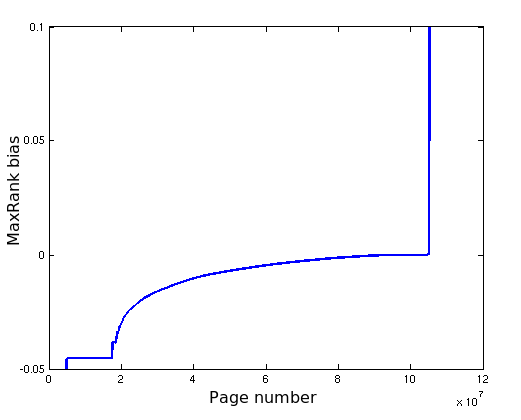}
\ifthenelse{\boolean{llncs}}{
\caption{Recognizing spam with the bias vector. Left:  the values of the bias for all pages. Right: zoom on bias values near 0. Pages are sorted by growing bias value. Spam pages of the training set
have a large positive bias value, non spam pages of the training set have a negative bias value.
Pages in between describe a ``continuum'' of values and the separation between pages considered spam
or not is arbitrarily set.} 
}
{
\caption{Recognizing spam with the bias vector. Top: the values of the bias for all pages. Bottom: zoom on bias values near 0. Pages are sorted by growing bias value. Spam pages of the training set
have a large positive bias value, non spam pages of the training set have a negative bias value.
Pages in between describe a ``continuum'' of values and the separation between pages considered spam
or not is arbitrarily set.} 
}
\label{fig:maxrankvalues}
\end{figure}
\begin{figure} 
 \centering
\includegraphics[width=26em]{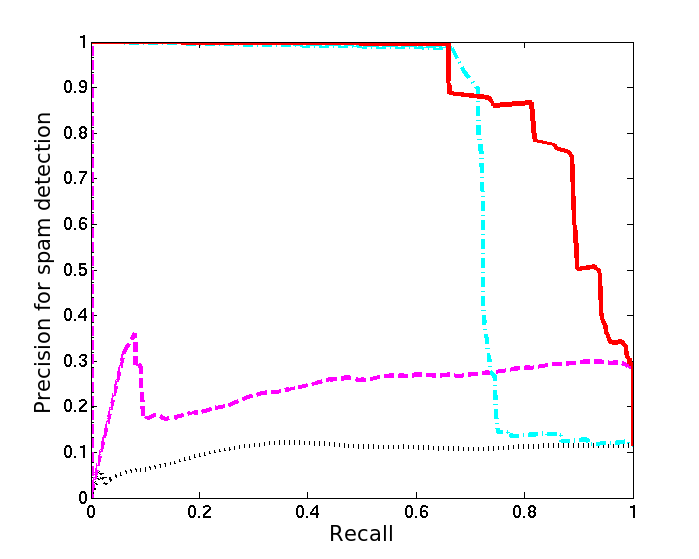}
\includegraphics[width=26em]{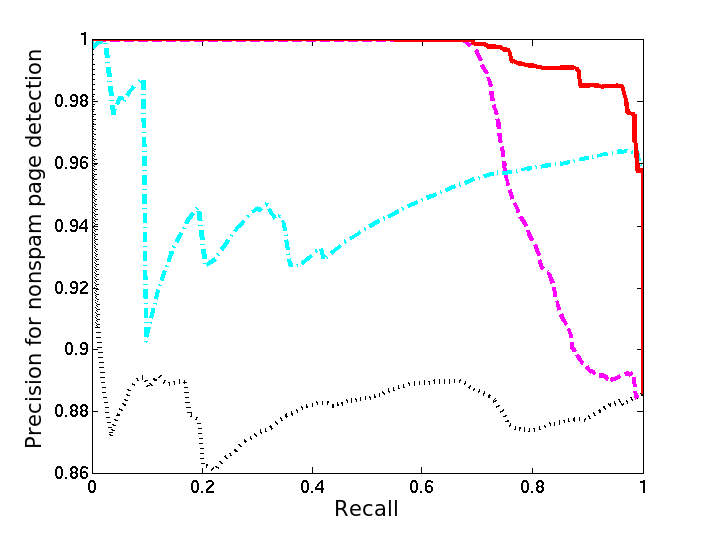}
\ifthenelse{\boolean{llncs}}
{
\caption{Left: Precision as a function of recall for spam detection.
Right: Precision as a function of recall for detection of non spam pages.
We present the result for four algorithms: 
PageRank (dotted line below), TrustRank (dashed line),
AntiTrustRank (dash-dotted line) and MaxRank bias (solid line). Given a vector of scores, the various pairs of precision-recall values
are obtained for various thresholds defining the discrimination between pages considered spam and non spam. 
TrustRank and AntiTrustRank have a precision of 1 on their seed set, which represent around 70\%
of the total test set. But out of their training set, their precision decreases quickly. 
MaxRank, on the other hand, remains precise even out of its training set.}
}
{
\caption{Top: Precision as a function of recall for spam detection.
Bottom: Precision as a function of recall for detection of non spam pages.
We present the result for four algorithms: 
PageRank (dotted line below), TrustRank (dashed line),
AntiTrustRank (dash-dotted line) and MaxRank bias (solid line). Given a vector of scores, the various pairs of precision-recall values
are obtained for various thresholds defining the discrimination between pages considered spam and non spam. 
TrustRank and AntiTrustRank have a precision of 1 on their seed set, which represent around 70\%
of the total test set. But out of their training set, their precision decreases quickly. 
MaxRank, on the other hand, remains precise even out of its training set.}
}

\label{fig:spamdetection}
\end{figure}

\begin{figure} 
 \centering
\includegraphics[width=26em]{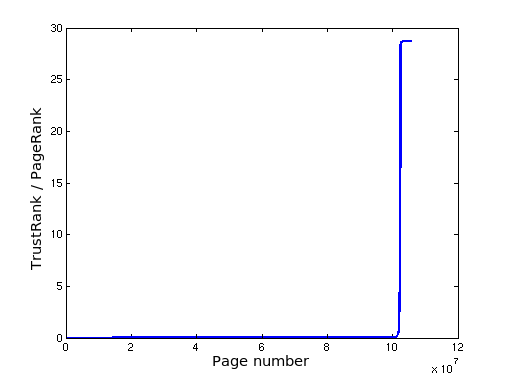}
\includegraphics[width=26em]{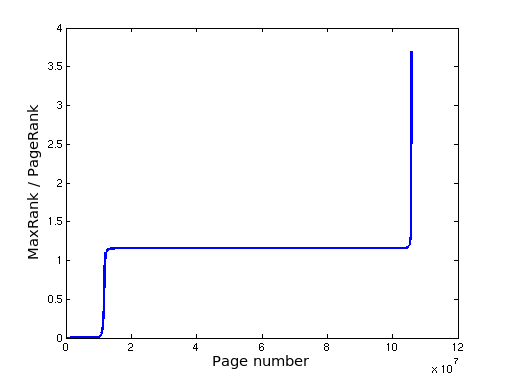}
\ifthenelse{\boolean{llncs}}
{
\caption{Left: PageRank demotion by TrustRank. We show the ratio of TrustRank over PageRank. Pages are sorted by growing promotion.
The demotion is very coarse, since there are many pages with a very small TrustRank score, when compared to their PageRank score. Right: PageRank demotion by MaxRank. The demotion is finer, most of the pages have a nearly unchanged score, up to a multiplicative constant.}
}
{
\caption{Top: PageRank demotion by TrustRank. We show the ratio of TrustRank over PageRank. Pages are sorted by growing promotion.
The demotion is very coarse, since there are many pages with a very small TrustRank score, when compared to their PageRank score. Bottom: PageRank demotion by MaxRank. The demotion is finer, most of the pages have a nearly unchanged score, up to a multiplicative constant.}
}
\label{fig:spamdemotion}
\end{figure}


\ifthenelse{\boolean{llncs}}{
\bibliographystyle{splncs03.bst}
}
{
\bibliographystyle{abbrv}
}
\bibliography{linkspam,../PAPEROPTIMRANK/pagerank}

\end{document}